\theoremstyle{plain} 
\newtheorem{theorem}{\indent\sc Theorem}[section]
\newtheorem{corollary}[theorem]{\indent\sc Corollary}
\newtheorem{proposition}[theorem]{\indent\sc Proposition}
\newtheorem{fact}[theorem]{\indent\sc Fact}
\theoremstyle{definition} 
\newtheorem{remark}[theorem]{\indent\sc Remark}
\def\R{{\mathbf{R}}}
\def\H{{\mathbf{H}}}
\def\Pi{{\mathbf{P}}}
\begin{document}

\title[Heinz-type estimates in Lorentz-Minkowski space]{Heinz-type mean curvature estimates \\ in Lorentz-Minkowski space} 

\author[A.~Honda]{Atsufumi Honda} 

\author[Y.~Kawakami]{Yu Kawakami} 

\author[M.~Koiso]{Miyuki Koiso} 

\author[S.~Tori]{Syunsuke Tori} 

\dedicatory{Dedicated to Professor Atsushi Kasue on the occasion of his 65th birthday}

\renewcommand{\thefootnote}{\fnsymbol{footnote}}
\footnote[0]{2020\textit{ Mathematics Subject Classification}.
Primary 53A10; Secondary 53B30, 53C24, 53C42.}
%
%
\keywords{
Heinz-type mean curvature estimate, Bernstein-type theorem, space-like graph, time-like graph, constant mean curvature} 
\thanks{ 
The authors were partially supported by JSPS KAKENHI Grant Number JP18H04487, JP19K03463, JP19K14526, JP20H04642 and JP20H01801.} 
\address{
Department of Applied Mathematics, \endgraf
Faculty of Engineering, \endgraf 
Yokohama National University, \endgraf
79-5, Tokiwadai, Hodogaya-ku, Yokohama, 240-8501, Japan 
}
\email{honda-atsufumi-kp@ynu.ac.jp}

\address{
Faculty of Mathematics and Physics, \endgraf
Institute of Science and Engineering, \endgraf 
Kanazawa University, \endgraf
Kanazawa, 920-1192, Japan 
}
\email{y-kwkami@se.kanazawa-u.ac.jp}

\address{
Institute of Mathematics for Industry, \endgraf 
Kyushu University, \endgraf
744, Motooka Nishi-ku, Fukuoka, 819-0395, Japan
}
\email{koiso@math.kyushu-u.ac.jp}

\address{
Graduate School of Natural Science and Technology, \endgraf 
Kanazawa University, \endgraf
Kanazawa, 920-1192, Japan
}
\email{shunsuke-1350@stu.kanazawa-u.ac.jp}


\maketitle

\begin{abstract}
We provide a unified description of Heinz-type mean curvature estimates under an assumption on the gradient bound for space-like graphs and time-like graphs  
in the Lorentz-Minkowski space. As a corollary, we give a unified vanishing theorem of mean curvature for these entire graphs of constant mean curvature.  
\end{abstract}

\section{Introduction}\label{Sec-Int}
The celebrated theorem of Bernstein \cite{Be1915} (as for a simple proof, see \cite{Ni1957}), stating that the only entire minimal graph in the Euclidean $3$-space ${\R}^3$ is a plane, 
has played a seminal role in the evolution of surface theory. 
For instance, by using this theorem,  we can show the uniqueness theorem for an entire graph of constant mean curvature in ${\R}^3$.   
Indeed Heinz \cite{He1955} proved that if $\varphi (u^1, u^2)$ is a $C^2$-differentiable function defined on the open disk $B^2 (R)$ with center at the origin and radius $R (>0)$ in the $(u^{1}, u^{2})$-plane, and the mean curvature $H$ of the graph 
$\Gamma_{\varphi}:= \{ (u^{1}, u^{2}, \varphi (u^{1}, u^{2})) \in \R^3 \, ; \,  (u^{1}, u^{2}) \in B^2 (R) \}$ of $\varphi$ satisfies $|H|\geq \alpha >0$, where $\alpha$ is constant, then $R\leq 1/\alpha$. As a corollary, we have a vanishing theorem of mean curvature for entire graphs of constant mean curvature, that is, if  $\Gamma_{\psi}$ is an entire graph of 
constant mean curvature, then $H\equiv 0$.     
Combining these results, we obtain that the only entire graph of constant mean curvature in ${\R}^3$ 
is a plane  (see \cite[Section 2.1]{Ke2003}). The Heinz result was extended to the case of graphic hypersurfaces in the Euclidean $(n+1)$-space ${\R}^{n+1}$ 
by Chern \cite{Ch1965} and Flanders \cite{Fl1966}.  

We will study this subject for graphic hypersurfaces in the Lorentz-Minkowski space. Here we recall some basic definitions and fundamental facts. We denote by ${\R}^{n+1}_{1} = (\R^{n+1}, \langle \,  \,  ,  \, \rangle_{L})$ the Lorentz-Minkowski $(n+1)$-space with the Lorentz metric 
\[
\langle (x^{1}, \ldots , x^{n}, x^{n+1}), (y^{1}, \ldots , y^{n}, y^{n+1}) \rangle_{L} := x^{1}y^{1} + \cdots + x^{n}y^{n} -x^{n+1}y^{n+1},  
\]
where $ (x^{1}, \ldots , x^{n}, x^{n+1})$, $(y^{1}, \ldots , y^{n}, y^{n+1}) \in \R^{n+1}$. Let $M^{n}$ be a smooth orientable $n$-manifold. 
An immersion $f\colon M^n \to \R^{n+1}_{1}$ is called {\it space-like} 
if the induced metric $g:= f^{\ast} \langle \,  \,  ,  \, \rangle_{L}$ on $M^n$ is Riemannian, and is called {\it time-like} if $g$ on $M^n$ is Lorentzian.    
A space-like (resp. time-like) immersion of constant mean curvature is called a {\it space-like (resp. time-like) constant mean curvature immersion}. 
In particular, a space-like immersion with vanishing mean curvature is called a {\it space-like maximal immersion} and 
a time-like immersion with vanishing mean curvature is called a {\it time-like minimal immersion}. 

Let $\psi \colon \Omega \to \mathbf{R}$ be a $C^2$-differentiable function defined on a domain $\Omega$ in $\R^{n}$. We consider the graph $\Gamma_{\psi}:= \{ (u^{1}, \ldots, u^{n}, \psi (u^{1}, \ldots , u^{n}))\in {\R}^{n+1}_{1} \, ; \, 
(u^{1}, \ldots , u^{n})\in \Omega \}$ of $\psi$. If $\Omega = \R^{n}$, then $\Gamma_{\psi}$ is called an {\it entire graph} of $\psi$ in ${\R}^{n+1}_{1}$. 
If the graph $\Gamma_{\psi}$ of $\psi$ is space-like, the gradient $\nabla \psi := (\psi_{u^1}, \ldots, \psi_{u^n})$ of $\psi$ satisfies $|\nabla \psi|:= \sqrt{(\psi_{u^{1}})^2 +  
\cdots + (\psi_{u^{n}})^2} <1$ on $\Omega$, and 
\begin{equation}\label{1-1}
\displaystyle nH = \sum_{i=1}^{n} \dfrac{\partial }{\partial u^i}\left( \dfrac{\psi_{u^i}}{\sqrt{1-|\nabla \psi|^2}} \right) = \text{div} \left( \dfrac{\nabla \psi}{\sqrt{1-|\nabla \psi|^2}}\right) , 
\end{equation}
that is, 
\[
\displaystyle (1-|\nabla \psi|^{2})\sum_{i=1}^{n} \dfrac{\partial^{2} \psi}{(\partial u^{i})^{2}} +\sum_{i, j=1}^{n}\dfrac{\partial \psi}{\partial u^{i}} \dfrac{\partial \psi}{\partial u^{j}} \dfrac{\partial^{2} \psi}{\partial u^{i} \partial u^{j}}= nH(1-|\nabla \psi|^{2})^{3/2}  
\]
holds. Here $H$ is the mean curvature of $\Gamma_{\psi}$ and $\psi_{u^i}:= \partial \psi / \partial u^{i}$. 
If the graph $\Gamma_{\psi}$ of $\psi$ is time-like, its gradient $\nabla \psi$ satisfies $|\nabla \psi |>1$ on $\Omega$, and 
\begin{equation}\label{1-1-2}
\displaystyle nH = \sum_{i=1}^{n} \dfrac{\partial }{\partial u^i}\left( \dfrac{\psi_{u^i}}{\sqrt{|\nabla \psi|^2 -1}} \right) = \text{div} \left( \dfrac{\nabla \psi}{\sqrt{|\nabla \psi|^2 -1}}\right) 
\end{equation}
holds. 

For entire space-like maximal graphs in $\R^{n+1}_{1}$, the following uniqueness theorem, called the Calabi-Bernstein theorem, is well-known.   
This result was proved by Calabi \cite{Ca1968} for $n\leq 4$ and Cheng-Yau \cite{CY1976} for all $n$.  
\begin{fact}[The Calabi-Bernstein theorem]\label{fact1}
Any entire space-like maximal graph is a space-like hyperplane in  $\R^{n+1}_{1}$. 
\end{fact}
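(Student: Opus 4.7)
The plan is to follow the approach of Cheng-Yau, treating the entire graph $\Gamma_\psi$ as an abstract Riemannian manifold and combining a Bochner-type identity for space-like maximal hypersurfaces with a maximum principle at infinity. Set $w := 1/\sqrt{1-|\nabla\psi|^2}$, which measures the hyperbolic angle between the future-pointing time-like unit normal of $\Gamma_\psi$ and the axis $\partial/\partial x^{n+1}$, and let $g_{ij} := \delta_{ij} - \psi_{u^i}\psi_{u^j}$ be the induced Riemannian metric on $\Omega = \R^n$. In these notations the maximal equation \eqref{1-1} with $H\equiv 0$ becomes simply $\sum_i (w\,\psi_{u^i})_{u^i}=0$. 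The goal is to show that $w\equiv 1$, for then $\nabla \psi$ is constant and $\psi$ must be affine.

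First I would prove that the induced metric $g$ on $\R^n$ is complete. Since $|\nabla\psi|$ may in principle approach $1$ along sequences going to infinity in the $u$-coordinates, this step is not formal; following Cheng-Yau, one estimates the $g$-length of divergent curves from below by exploiting the maximal equation. Next I would derive the identity
\[
\Delta_g w \;=\; |A|^2\, w,
\]
where $A$ denotes the second fundamental form of the immersion and $\Delta_g$ is the Laplace-Beltrami operator of $g$. A direct consequence of the Gauss equation in the Lorentz-Minkowski setting is that for a maximal space-like hypersurface one has $\mathrm{Ric}_g = A^2 \geq 0$; this is the sign that fails in the Euclidean ambient case and is the structural reason the Bernstein-type conclusion is so strong here. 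Thus $\Gamma_\psi$ is a complete Riemannian manifold with non-negative Ricci curvature on which $w\geq 1$ is a positive subharmonic function. Applying the Omori-Yau maximum principle (equivalently, the Cheng-Yau gradient estimate for positive subharmonic functions under $\mathrm{Ric}\geq 0$), one concludes that $w$ is constant; then $|A|^2 w\equiv 0$ forces $A\equiv 0$, so $\Gamma_\psi$ is totally geodesic in $\R^{n+1}_1$ and therefore a space-like hyperplane.

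The main obstacle is the completeness assertion: it does not follow formally from $\psi$ being defined on all of $\R^n$, because the degenerating factor $1-|\nabla\psi|^2$ could conceivably pinch off the intrinsic geometry at infinity. This is precisely the delicate analytic content of Cheng-Yau's contribution beyond Calabi's original case $n\leq 4$, where a more explicit function-theoretic argument, using the subharmonicity of suitably chosen coordinate functions, suffices. Once completeness is in hand, the Ricci sign noted above makes the maximum-principle step essentially automatic, which is why the statement admits such a clean global form with no gradient hypothesis beyond the space-like condition $|\nabla\psi|<1$.
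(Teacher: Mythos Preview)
The paper does not prove this statement at all: Fact~\ref{fact1} is quoted from the literature with references to Calabi and Cheng--Yau, so there is no ``paper's own proof'' to compare your proposal against.

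That said, your outline has a genuine gap at the maximum-principle step. You write that the Omori--Yau maximum principle (or the Cheng--Yau gradient estimate) applied to the positive subharmonic function $w$ on a complete manifold with $\mathrm{Ric}_g\geq 0$ yields that $w$ is constant. But the Omori--Yau principle requires $w$ to be bounded above, and this is precisely what is not known: boundedness of $w=1/\sqrt{1-|\nabla\psi|^2}$ is equivalent to $|\nabla\psi|\leq C<1$, which (by Corollary~\ref{cor3} of this very paper) already implies the conclusion. The Cheng--Yau gradient estimate you invoke is for positive \emph{harmonic} functions, not for general subharmonic ones; on a complete manifold with non-negative Ricci curvature there are plenty of unbounded positive subharmonic functions (e.g.\ $|x|^2$ on $\R^n$), so no black-box Liouville theorem of the form you describe is available.

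You have correctly identified the key geometric ingredients---the identity $\Delta_g w=|A|^2 w$, the sign $\mathrm{Ric}_g=A^2\geq 0$, and the (genuinely nontrivial) completeness of the induced metric---but the passage from these to ``$w$ is constant'' is exactly where the real analytic work lies. Cheng--Yau's actual argument does not proceed by a one-line Liouville step; it uses a delicate local gradient estimate (of Bernstein--Moser type, adapted to the maximal surface equation) to bound the second fundamental form, and only then concludes. Your sketch would need to replace the Omori--Yau invocation with such an estimate, or alternatively with a Simons-type inequality for $|A|^2$ combined with an integral or iteration argument exploiting the polynomial volume growth forced by $\mathrm{Ric}_g\geq 0$.
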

We remark that an improvement of this theorem and a fluid mechanical interpretation of the duality between minimal graphs in ${\R}^3$ and space-like maximal graphs in $\R^{3}_{1}$ are discussed in \cite{AUY2019} (cf. \cite{Be1958}).  

On the other hand, for entire space-like constant mean curvature graphs, the uniqueness theorem does not hold in general. In fact, the graph of the function  
\begin{equation}\label{eq-hyperboloid}
\psi (u^1, \ldots, u^n) = \left( (u^1)^2 + \cdots + (u^n)^2 + \dfrac{1}{H^2} \right)^{1/2} \quad (H>0) 
\end{equation}
is an entire space-like constant mean curvature $H$ graph which is not a hyperplane. This graph is a hyperboloid. 
Many other entire space-like constant mean curvature graphs are constructed by Treibergs \cite{Tr1982}.   
Hence the following questions naturally arise. What are the situations where Heinz-type mean curvature estimate and Bernstein-type uniqueness theorem hold for space-like graphs in ${\R}^{n+1}_{1}$?  What are these situations for time-like graphs in ${\R}^{n+1}_{1}$? 

In this paper, we perform a systematic study for these questions. In particular, we give a unified description of Heinz-type mean curvature estimates under an assumption on 
the gradient bound for space-like graphs and time-like graphs 
in ${\R}^{n+1}_{1}$ (Theorem \ref{thm1}). As a corollary, we obtain a unified vanishing theorem of mean curvature for entire graphs of constant mean curvature 
with a gradient bound (Corollary \ref{cor1}). In Section \ref{Sec-App}, by applying these results, we answer the questions mentioned above.  
Moreover we give a short commentary about the recent study by the authors for entire constant mean curvature graphs in $\R^{n+1}_{1}$. 

Finally, the authors gratefully acknowledge the useful comments from Shintaro Akamine and Atsushi Kasue during the preparation of this paper. 
 

\section{Main results}\label{Sec-Main}
The main theorem is stated as follows: 
\begin{theorem}\label{thm1}
Let $B^{n}(R)$ be the open $n$-ball with center at the origin and radius $R (> 0)$ in $\mathbf{R}^{n}$, 
and $\psi (u^{1}, \ldots , u^{n})$ a $C^2$-differentiable function on  $B^{n}(R)$. Suppose that there exist constants $M>0$ and $k\in \R$ such that 
\begin{equation}\label{eq2-0}
\dfrac{|\nabla \psi|}{\sqrt{|1-|\nabla \psi|^2|}}\leq M \biggl{(}(u^1)^2+\cdots +(u^n)^2  \biggr{)}^{k} 
\end{equation}
on $B^{n}(R)$. Assume that $\Gamma_{\psi}$ is a space-like or time-like graph of $\psi$ in $\R^{n+1}_{1}$. 
If the mean curvature $H$ of $\Gamma_{\psi}$ satisfies the inequality 
\[
|H|\geq \alpha >0, 
\]
where $\alpha$ is constant, then the following inequality holds: 
\begin{equation}\label{eq2-1}
\alpha \leq MR^{2k-1}. 
\end{equation}
\end{theorem}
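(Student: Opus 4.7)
The plan is to follow the classical Heinz--Chern strategy: write the mean curvature as a divergence, integrate over concentric balls, and apply the divergence theorem to turn the hypothesized growth bound on the gradient into a bound on $\alpha$.

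First, introduce the vector field
\[
X := \dfrac{\nabla\psi}{\sqrt{|1-|\nabla\psi|^{2}|}}
\]
on $B^{n}(R)$. Whether $\Gamma_{\psi}$ is space-like or time-like, the divergence formulas \eqref{1-1} and \eqref{1-1-2} may both be written uniformly as $nH = \operatorname{div}(X)$, and the hypothesis \eqref{eq2-0} becomes the pointwise bound $|X|\le M((u^{1})^{2}+\cdots+(u^{n})^{2})^{k}$. This is precisely why the absolute value in the denominator of \eqref{eq2-0} is the right quantity: the space-like and time-like cases collapse to the same estimate.

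Next observe that since $\psi\in C^{2}$ and the graph is either space-like ($|\nabla\psi|<1$) or time-like ($|\nabla\psi|>1$), the mean curvature $H$ is a continuous function on the connected set $B^{n}(R)$. The assumption $|H|\ge\alpha>0$ forces $H$ to be nowhere zero, hence of constant sign. Therefore, for every $r\in(0,R)$, the divergence theorem applied to $X$ on $B^{n}(r)$ yields
\[
n\alpha \cdot \operatorname{Vol}(B^{n}(r)) \le \Bigl| \int_{B^{n}(r)} nH\, dV \Bigr| = \Bigl| \int_{\partial B^{n}(r)} \langle X,\nu\rangle\, dS \Bigr| \le M r^{2k} \cdot \operatorname{Area}(\partial B^{n}(r)).
\]
Plugging in $\operatorname{Vol}(B^{n}(r))=\omega_{n-1}r^{n}/n$ and $\operatorname{Area}(\partial B^{n}(r))=\omega_{n-1}r^{n-1}$, the factor $\omega_{n-1}$ cancels and we obtain $\alpha r^{n}\le M r^{n+2k-1}$, that is, $\alpha\le M r^{2k-1}$ for every $r\in(0,R)$. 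Letting $r\to R^{-}$ gives \eqref{eq2-1}.

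I do not expect a real obstacle beyond identifying the unified divergence expression. The two slightly subtle points are the uniform treatment of the two causal types via the absolute value in $\sqrt{|1-|\nabla\psi|^{2}|}$, and the observation that $H$ has constant sign so the lower bound on $|\int nH\,dV|$ is just $n\alpha$ times the volume; once these are noted, the argument is a direct divergence-theorem flux estimate.
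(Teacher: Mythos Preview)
Your proof is correct and follows essentially the same route as the paper: the paper phrases the argument with the $(n-1)$-form $\omega=\sum_i(-1)^{i-1}X^i\,du^1\wedge\cdots\wedge\widehat{du^i}\wedge\cdots\wedge du^n$ and Stokes' theorem, which is exactly your divergence-theorem computation in different notation, and handles the sign of $H$ by ``changing the direction of the normal vector if necessary'' rather than by your continuity/connectedness argument. Your explicit observation that $H$ has constant sign is a small improvement in rigor, since the paper's normal-flip remark tacitly presupposes this.
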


\begin{proof}
Let $\omega$ be the $(n-1)$-form given by 
\begin{equation}\label{eq2-a}
\displaystyle \omega = \sum_{i=1}^{n} (-1)^{i-1} \left( \dfrac{\psi_{u^i}}{\sqrt{|1-|\nabla \psi|^2}|} \right)\, du^{1}\wedge \cdots \wedge \widehat{du^i}\wedge \cdots \wedge du^n,  
\end{equation}
where $\widehat{du^i}$ means that $du^i$ is omitted from the wedge product. Then we can obtain $d\omega = nH\, du^{1}\wedge \cdots \wedge du^n$ 
by \eqref{1-1} and \eqref{1-1-2}.  
Take any positive number $R'$ satisfying $0< R' <R$. The Stokes theorem yields the following equality:
\begin{equation}\label{eq2-2}
\displaystyle \int_{\overline{B}^{n}(R')} d\omega = \int_{\partial B^{n}(R')} \omega, 
\end{equation} 
where $\overline{B}^{n}(R')$ is the closure and $\partial B^{n}(R')$ is the boundary of  $B^{n}(R')$. 

We may assume $H\geq \alpha >0$ by changing the direction of the normal vector if necessary. Let $V_{n}$ be the volume of the unit closed $n$-ball $\overline{B}^n (1)$ and 
$A_{n-1}$ the volume of the unit $(n-1)$-sphere $\partial B^n (1)$. Then, by radial integration, $nV_{n}=A_{n-1}$ holds. The absolute value of the left-hand side of \eqref{eq2-2} becomes 
\begin{eqnarray*}
\left| \displaystyle \int_{\overline{B}^{n}(R')} d\omega  \right| & =  & n\int_{\overline{B}^{n}(R')} H\, du^{1}\cdots du^{n} \\
                                                                                           & \geq & n\alpha \int_{\overline{B}^{n}(R')} du^{1}\cdots du^{n} = n\alpha (R')^n V_{n}.  \\
\end{eqnarray*}
On the other hand, by using the Cauchy-Schwarz inequality, the absolute value of the right-hand side of \eqref{eq2-2} becomes 
\begin{eqnarray*}
\left| \int_{\partial B^{n}(R')} \omega \right| &=& \left| \int_{\partial B^{n}(R')}  \sum_{i=1}^{n} (-1)^{i-1} \left( \dfrac{\psi_{u^i}}{\sqrt{|1-|\nabla \psi|^2|}} \right)\, du^{1}\cdots \widehat{du^i}\cdots du^n \right| \\
                                                                &\leq & \displaystyle \int_{\partial B^{n}(R')} \dfrac{|\nabla \psi|}{\sqrt{|1- |\nabla \psi|^2|}} \left( \sum_{i=1}^{n} \left( du^{1}\cdots \widehat{du^i}\cdots du^n \right)^{2} \right)^{1/2} . \\
\end{eqnarray*}
By \eqref{eq2-0}, we have 
\begin{eqnarray*}
\displaystyle \left| \int_{\partial B^{n}(R')} \omega \right| &\leq & M (R')^{2k} \int_{\partial B^{n}(R')} \left( \sum_{i=1}^{n} \left( du^{1}\cdots \widehat{du^i}\cdots du^n \right)^{2} \right)^{1/2} \\
                                                                                     &=& M (R')^{n+2k-1} A_{n-1}. \\
\end{eqnarray*} 
We thus obtain $n\alpha (R')^{n}V_{n} \leq M(R')^{n+2k-1}A_{n-1}$, that is, $\alpha\leq M(R')^{2k-1}$. The proof is completed by letting $R' \to R$. 
\end{proof}

From the above argument, we give the following generalization of the result which was obtained by Salavessa \cite[Theorem 1.5]{Sa2008} for space-like 
graphs in ${\R}^{n+1}_{1}$. 

\begin{proposition}\label{thm-sala}
Let $D$ be a relatively compact domain (i.e. its closure $\overline{D}$ is compact) of ${\R}^n$ with smooth boundary $\partial D$.  
Assume that $\Gamma_{\psi} \subset {\R}^{n+1}_{1}$ is a space-like or time-like graph of  a $C^2$-differentiable function $\psi$. 
Set $m_{D}:= \max_{\overline{D}} |\nabla \psi|$ if $\Gamma_{\psi}$ is a space-like graph in $\R^{n+1}_{1}$ 
and $m_{D}:= \min_{\overline{D}} |\nabla \psi|$ if $\Gamma_{\psi}$ is a time-like graph 
in $\R^{n+1}_{1}$. Then, for the mean curvature $H$ of $\Gamma_{\psi}$, we have 
\begin{equation}\label{eq-sala-1}
\displaystyle \min_{\overline{D}} |H| \leq \dfrac{1}{n} \dfrac{m_{D}}{\sqrt{|1- (m_{D})^2|}} \dfrac{A (\partial D)}{V (\overline D)}. 
\end{equation} 
Here $V(\overline{D})$ (resp. $A (\partial D)$\,) is the volume of $\overline{D}$ (resp. $\partial D$). 
\end{proposition}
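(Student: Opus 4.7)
The plan is to follow the same Stokes-theorem scheme as in the proof of Theorem \ref{thm1}, simply replacing the ball $B^n(R')$ by the relatively compact domain $D$, and then to absorb the gradient factor into a single sharp constant using monotonicity. Concretely, I would start with the same $(n-1)$-form $\omega$ defined by \eqref{eq2-a}. Because $\omega$ is constructed from the expressions appearing in \eqref{1-1} and \eqref{1-1-2}, one still has $d\omega = nH\,du^{1}\wedge\cdots\wedge du^{n}$ regardless of whether $\Gamma_{\psi}$ is space-like or time-like. Stokes' theorem applied to $\overline{D}$ then gives
\begin{equation*}
\int_{\overline{D}} nH\,du^{1}\cdots du^{n} \;=\; \int_{\partial D} \omega .
\end{equation*}

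For the left-hand side, I would first note that if $\min_{\overline{D}}|H|=0$ the asserted inequality is trivial, so I may assume $\min_{\overline{D}}|H|>0$. Continuity of $H$ on the connected(-component-wise) domain then forces $H$ to have constant sign on $\overline{D}$; after reversing the normal if necessary, $H\geq\min_{\overline{D}}|H|$ pointwise, which yields the lower bound
\begin{equation*}
\left|\int_{\overline{D}} d\omega\right| \;\geq\; n\,\Bigl(\min_{\overline{D}}|H|\Bigr)\,V(\overline{D}).
\end{equation*}

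For the right-hand side I would repeat the Cauchy--Schwarz estimate used in Theorem \ref{thm1}, obtaining
\begin{equation*}
\left|\int_{\partial D} \omega\right| \;\leq\; \int_{\partial D} \dfrac{|\nabla\psi|}{\sqrt{|1-|\nabla\psi|^{2}|}}\,\left(\sum_{i=1}^{n}\bigl(du^{1}\cdots\widehat{du^{i}}\cdots du^{n}\bigr)^{2}\right)^{1/2}.
\end{equation*}
The key observation is the monotonicity of $t\mapsto t/\sqrt{|1-t^{2}|}$: it is increasing on $[0,1)$ and decreasing on $(1,\infty)$. Hence in the space-like case ($|\nabla\psi|<1$) the integrand is bounded above by $m_{D}/\sqrt{1-m_{D}^{2}}$ with $m_{D}=\max_{\overline{D}}|\nabla\psi|$, while in the time-like case ($|\nabla\psi|>1$) it is bounded above by $m_{D}/\sqrt{m_{D}^{2}-1}$ with $m_{D}=\min_{\overline{D}}|\nabla\psi|$. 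Both cases therefore collapse to the uniform bound $m_{D}/\sqrt{|1-m_{D}^{2}|}$, and the remaining integrand is precisely the Euclidean $(n-1)$-dimensional surface element of $\partial D$, giving $A(\partial D)$ after integration.

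Combining the two estimates yields $n(\min_{\overline{D}}|H|)V(\overline{D})\leq (m_{D}/\sqrt{|1-m_{D}^{2}|})A(\partial D)$, which rearranges to \eqref{eq-sala-1}. I do not expect a serious obstacle; the only subtle point is the sign argument for $H$ used to convert the integral of $nH$ into a lower bound involving $\min|H|$, and the monotonicity observation that unifies the space-like and time-like cases into the single factor $m_{D}/\sqrt{|1-m_{D}^{2}|}$.
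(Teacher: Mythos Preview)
Your proposal is correct and follows essentially the same route as the paper: apply Stokes' theorem to the form $\omega$ of \eqref{eq2-a} over $\overline{D}$, bound the left side below by $n(\min_{\overline{D}}|H|)V(\overline{D})$, and bound the right side above via the monotonicity of $t/\sqrt{1-t^2}$ on $(0,1)$ and of $t/\sqrt{t^2-1}$ on $(1,\infty)$ to extract the factor $m_D/\sqrt{|1-m_D^2|}$ times $A(\partial D)$. Your added remark that one may assume $\min_{\overline{D}}|H|>0$ (whence $H$ has constant sign on the connected domain) simply makes explicit what the paper leaves implicit.
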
 

\begin{remark}
To be precise, in \cite[Theorem 1.5]{Sa2008} Salavessa obtained this inequality for space-like graphs of $M\times N$, where $M$ 
is a Riemannian $n$-manifold and $N$ is an oriented Lorentzian $1$-manifold.    
\end{remark}

Indeed, from the Stokes theorem for the $(n-1)$-form $\omega$ given by \eqref{eq2-a}, we obtain 
\begin{equation}\label{eq2-2-2}
\displaystyle \int_{\overline{D}} d\omega = \int_{\partial D} \omega. 
\end{equation}
Then the absolute value of the left-hand side of \eqref{eq2-2-2} becomes 
\[
\left| \displaystyle \int_{\overline{D}}\, d\omega \,  \right| \geq n \, \min_{\overline{D}} |H|\, V (\overline{D}). 
\]
Since the function $t/\sqrt{1-t^2}$ is monotone increasing on $0< t< 1$ and $t/\sqrt{t^2 -1}$ is monotone decreasing on $t> 1$, we have 
\[
\left| \int_{\partial D} \, \omega \, \right| \leq \dfrac{m_{D}}{\sqrt{|1- (m_{D})^2|}}\, A (\partial D). 
\]
We have thus proved the inequality \eqref{eq-sala-1}. 
 
As a corollary of Theorem \ref{thm1}, we give the following unified vanishing theorem of mean curvature for entire space-like graphs and time-like graphs 
of constant mean curvature in ${\R}^{n+1}_{1}$. 

\begin{corollary}\label{cor1} 
Assume that $\Gamma_{\psi}$ is an entire space-like or time-like graph of a $C^2$-differentiable function $\psi (u^{1}, \ldots, u^{n})$ in $\R^{n+1}_{1}$. 
If the entire graph $\Gamma_{\psi}$ has constant mean curvature and there exist constants $M>0$ and $\varepsilon >0$ such that 
\begin{equation}\label{eq-Be}
\dfrac{|\nabla \psi|}{\sqrt{|1-|\nabla \psi|^2|}}\leq M \biggl{(}(u^1)^2+\cdots +(u^n)^2  \biggr{)}^{(1/2) - \varepsilon}
\end{equation}
on ${\R}^n$, then its mean curvature must vanish everywhere.  
\end{corollary}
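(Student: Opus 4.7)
The plan is to apply Theorem \ref{thm1} directly on arbitrarily large balls and let the radius tend to infinity. I would argue by contradiction: suppose the constant mean curvature $H$ of the entire graph $\Gamma_\psi$ is not zero, so there exists a constant $\alpha$ with $|H| \equiv \alpha > 0$.

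Set $k := \tfrac{1}{2} - \varepsilon$. The hypothesis \eqref{eq-Be} is exactly the gradient bound \eqref{eq2-0} with this value of $k$ and the prescribed constant $M$, and it holds globally on $\mathbf{R}^n$, hence in particular on every open ball $B^n(R)$ with $R > 0$. Since $\psi$ is entire and space-like (respectively time-like) on $\mathbf{R}^n$, its restriction to $B^n(R)$ satisfies all the hypotheses of Theorem \ref{thm1}. The conclusion of that theorem then gives
\[
\alpha \;\le\; M R^{2k-1} \;=\; M R^{-2\varepsilon}
\]
for every $R > 0$.

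Now I let $R \to \infty$. Because $\varepsilon > 0$, the right-hand side $M R^{-2\varepsilon}$ tends to $0$, forcing $\alpha \le 0$, which contradicts $\alpha > 0$. Hence $H \equiv 0$, as claimed.

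I do not anticipate any serious obstacle: the corollary is essentially a scaling observation extracting the exponent $k = 1/2 - \varepsilon$ from Theorem \ref{thm1}, the only subtlety being to check that the hypothesis $|H| \ge \alpha > 0$ of Theorem \ref{thm1} is met by a nonzero constant mean curvature (which is immediate) and that the gradient bound carries over to every sub-ball (which is automatic since \eqref{eq-Be} is assumed on all of $\mathbf{R}^n$).
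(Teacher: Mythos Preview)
Your proposal is correct and matches the paper's own argument essentially line for line: the paper also applies Theorem~\ref{thm1} with $k=\tfrac{1}{2}-\varepsilon$ on each $B^n(R)$ to obtain $|H|\le M R^{-2\varepsilon}$ and then lets $R\to\infty$. The only cosmetic difference is that the paper phrases it as a direct bound on $|H|$ rather than as a contradiction, but the content is identical.
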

\begin{proof}
By Theorem \ref{thm1}, the mean curvature $H$ of ${\Gamma}_{\psi}$ satisfies 
\[
|H|\leq \dfrac{M}{R^{2\varepsilon}} 
\]
on $B^{n}(R)$. We obtain $H\equiv 0$ by letting $R\rightarrow +\infty$. 
\end{proof}

\section{Applications}\label{Sec-App}
\subsection{Space-like case}\label{subsec-sp}
Then $|\nabla \psi|< 1$ holds. We give a geometric interpretation for $|\nabla \psi|/ \sqrt{1-|\nabla \psi|^2}$. 
When the graph $\Gamma_{\psi}$ is space-like, 
\begin{equation}\label{eq2-nu}
\nu (=  \nu (u^1, \cdots, u^n)) =\dfrac{1}{\sqrt{1- |\nabla \psi|^2}} (\psi_{u^1}, \cdots, \psi_{u^n}, 1)
\end{equation}
is the time-like unit normal vector field $\nu$ of $\Gamma_{\psi}$. Since $e_{n+1}=(0, \cdots , 0, 1) \in \R^{n+1}_{1}$ is also time-like, 
there exists a unique real-valued function $\theta (= \theta (u^1, \cdots , u^n ) )\geq 0$ such that $\langle \nu , e_{n+1} \rangle_{L} = -\cosh{\theta}$. 
This function $\theta$ is called the {\it hyperbolic angle} between $\nu$ and $e_{n+1}$ (see \cite{On1983}). By simple calculation, we have 
\begin{equation}\label{eq2-sinh}
\sinh{\theta} = \dfrac{|\nabla \psi|}{\sqrt{1-|\nabla \psi|^2}}. 
\end{equation}
We remark that L\'opez \cite{Lo2020} studies the function $|\nabla \psi|/ \sqrt{1-|\nabla \psi|^2}$ from the viewpoint of the 
Dirichlet problem. 

From Theorem \ref{thm1}, we obtain the following Heinz-type mean curvature estimate.  

\begin{corollary}\label{corA}
Let $\Gamma_{\psi}$ be a space-like graph of  a $C^2$-differentiable function $\psi (u^{1}, \ldots, u^{n})$ on $B^{n}(R)$ in ${\R}^{n+1}_{1}$ and 
$\theta$ the hyperbolic angle between $\nu$ and $e_{n+1}$. Suppose that there exist $M >0$ and $k\in \R$ such that 
\begin{equation}\label{eq-corA}
\sinh{\theta} \leq M \biggl{(}(u^1)^2+\cdots +(u^n)^2  \biggr{)}^{k} 
\end{equation}
on $B^{n}(R)$. If the mean curvature $H$ of $\Gamma_{\psi}$ satisfies the inequality $|H|\geq \alpha >0$, where $\alpha$ is constant, then $\alpha \leq MR^{2k-1}$ holds. 
\end{corollary}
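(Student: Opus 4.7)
The plan is essentially to recognize that Corollary \ref{corA} is a direct translation of Theorem \ref{thm1} into the geometric language of the hyperbolic angle, specific to the space-like case. Since no new analytic work is required, the proof will consist of verifying that the hypothesis \eqref{eq-corA} is literally identical to the hypothesis \eqref{eq2-0} of Theorem \ref{thm1} once the right identification is made.

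First, I would invoke the fact that the graph $\Gamma_{\psi}$ is space-like, which forces $|\nabla \psi|<1$ on $B^{n}(R)$. Consequently $|1-|\nabla \psi|^{2}| = 1-|\nabla \psi|^{2}$, so the awkward absolute value in \eqref{eq2-0} collapses and one has
\[
\frac{|\nabla \psi|}{\sqrt{|1-|\nabla \psi|^{2}|}} \;=\; \frac{|\nabla \psi|}{\sqrt{1-|\nabla \psi|^{2}}}.
\]

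Second, I would appeal to the geometric identification \eqref{eq2-sinh}, which is already derived in the text from the explicit form \eqref{eq2-nu} of the time-like unit normal $\nu$ and the definition $\langle \nu,e_{n+1}\rangle_{L}=-\cosh\theta$. This gives
\[
\sinh\theta \;=\; \frac{|\nabla \psi|}{\sqrt{1-|\nabla \psi|^{2}}}.
\]
Combining the two displays, the hypothesis \eqref{eq-corA} is equivalent to
\[
\frac{|\nabla \psi|}{\sqrt{|1-|\nabla \psi|^{2}|}} \;\leq\; M\bigl((u^{1})^{2}+\cdots+(u^{n})^{2}\bigr)^{k}
\]
on $B^{n}(R)$, which is precisely the bound \eqref{eq2-0} required by Theorem \ref{thm1}.

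Third, with the mean curvature hypothesis $|H|\geq \alpha>0$ carried over verbatim, Theorem \ref{thm1} applies and yields the desired conclusion $\alpha\leq MR^{2k-1}$. There is no genuine obstacle here; the only thing that could go wrong is a sign or absolute-value confusion in passing from $|1-|\nabla\psi|^{2}|$ to $1-|\nabla\psi|^{2}$, which is why I would state the space-like inequality $|\nabla\psi|<1$ explicitly at the start rather than treat it as understood.
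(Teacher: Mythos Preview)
Your proposal is correct and matches the paper's own approach: the paper states Corollary~\ref{corA} immediately after establishing the identity \eqref{eq2-sinh} and simply introduces it with ``From Theorem~\ref{thm1}, we obtain\ldots,'' giving no further proof. Your argument spells out exactly the intended reduction---using $|\nabla\psi|<1$ to drop the absolute value and then invoking \eqref{eq2-sinh} to rewrite \eqref{eq-corA} as \eqref{eq2-0}---so nothing is missing or different.
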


By virtue of Fact \ref{fact1} and Corollary \ref{cor1}, we obtain the following Bernstein-type theorem for entire space-like constant mean curvature 
graphs in ${\R}^{n+1}_{1}$. 

\begin{corollary}\label{corB}
If an entire space-like graph $\Gamma_{\psi}$ of a $C^2$-differentiable function $\psi (u^{1}, \ldots, u^{n})$ in ${\R}^{n+1}_{1}$ has constant mean curvature and there exist constants $M>0$ and $\varepsilon >0$ 
such that 
\begin{equation}\label{eq-corB}
\sinh{\theta}\leq M \biggl{(}(u^1)^2+\cdots +(u^n)^2  \biggr{)}^{(1/2) - \varepsilon}
\end{equation}
on $\R^n$, then it must be a space-like hyperplane. Here $\theta$ is the hyperbolic angle between $\nu$ and $e_{n+1}$. 
\end{corollary}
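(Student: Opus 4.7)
The plan is to reduce Corollary \ref{corB} directly to the combination of Corollary \ref{cor1} and Fact \ref{fact1}, using the geometric identity \eqref{eq2-sinh} to bridge the hypothesis, which is stated in terms of $\sinh\theta$, and the hypothesis \eqref{eq-Be}, which is stated in terms of the gradient. Since $\Gamma_{\psi}$ is assumed to be a space-like graph, $|\nabla\psi|<1$ holds everywhere, so the denominator in \eqref{eq2-sinh} is $\sqrt{1-|\nabla\psi|^{2}}=\sqrt{|1-|\nabla\psi|^{2}|}$ and the identity $\sinh\theta=|\nabla\psi|/\sqrt{|1-|\nabla\psi|^{2}|}$ is valid pointwise on $\R^{n}$.

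First, I would substitute this identity into the assumed bound \eqref{eq-corB}: the hypothesis $\sinh\theta\le M\bigl((u^{1})^{2}+\cdots+(u^{n})^{2}\bigr)^{(1/2)-\varepsilon}$ on $\R^{n}$ becomes exactly the hypothesis \eqref{eq-Be} of Corollary \ref{cor1}, with the same constants $M$ and $\varepsilon$. Since $\Gamma_{\psi}$ is assumed to be an entire space-like graph of constant mean curvature, all hypotheses of Corollary \ref{cor1} are satisfied. I would then invoke that corollary to conclude that the mean curvature $H$ of $\Gamma_{\psi}$ vanishes identically on $\R^{n}$.

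At this stage $\Gamma_{\psi}$ is an entire space-like maximal graph in $\R^{n+1}_{1}$. I would finish by applying the Calabi-Bernstein theorem (Fact \ref{fact1}), which asserts that any such graph is a space-like hyperplane, giving the desired conclusion.

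The argument is essentially a chain of invocations, so there is no substantive technical obstacle; the only point to verify carefully is that the absolute value in the denominator of \eqref{eq-Be} does not cause any issue, which it does not because the space-like condition automatically gives $1-|\nabla\psi|^{2}>0$. The entire weight of the result is carried by Corollary \ref{cor1} (which in turn rests on Theorem \ref{thm1}) and by Fact \ref{fact1}; the novelty is simply the recognition that the growth hypothesis \eqref{eq-corB} on the hyperbolic angle is precisely the geometric reformulation of the analytic gradient bound needed to trigger these two results in sequence.
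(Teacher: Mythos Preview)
Your proof is correct and follows exactly the route the paper intends: the authors simply state that Corollary~\ref{corB} follows ``by virtue of Fact~\ref{fact1} and Corollary~\ref{cor1}'', and you have spelled out precisely this chain, using the identity \eqref{eq2-sinh} to translate the $\sinh\theta$ hypothesis into the gradient bound \eqref{eq-Be}.
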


Corollary \ref{corB} is optimal because there exists an example which is not congruent to a space-like hyperplane and satisfies \eqref{eq-corB} for $\varepsilon =0$. In fact, 
the function $\psi$ given by \eqref{eq-hyperboloid} is not linear and its graph has constant mean curvature. Moreover it satisfies 
\[
\sinh{\theta} = H \sqrt{(u^1)^2+\cdots +(u^n)^2},  
\] 
which is the equality in \eqref{eq-corB} for $\varepsilon =0$. 

From the case where $\varepsilon =1/2$ in Corollary \ref{corB}, we obtain the following  Liouville-type result.   

\begin{corollary}\label{cor2}   
Let $\Gamma_{\psi}$ be an entire space-like constant mean curvature graph of  a $C^2$-differentiable function $\psi (u^{1}, \ldots, u^{n})$ 
in $\R^{n+1}_{1}$ and $\theta$ the hyperbolic angle between $\nu$ and $e_{n+1}$. 
If $\sinh{\theta}$ is bounded on ${\R}^n$, then ${\Gamma}_{\psi}$ must be a space-like hyperplane.  
\end{corollary}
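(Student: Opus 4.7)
The plan is to derive Corollary \ref{cor2} as a direct consequence of Corollary \ref{corB} by choosing the exponent $\varepsilon$ so that the polynomial bound on the right-hand side of \eqref{eq-corB} degenerates to a constant bound. More precisely, I would note that the assumption ``$\sinh\theta$ is bounded on $\R^n$'' means there exists a constant $M>0$ such that $\sinh\theta\le M$ everywhere, and this is exactly the inequality \eqref{eq-corB} with $\varepsilon=1/2$, because in that case the exponent $(1/2)-\varepsilon$ vanishes and $((u^1)^2+\cdots+(u^n)^2)^0\equiv 1$.

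Since $\varepsilon=1/2>0$, all hypotheses of Corollary \ref{corB} are satisfied: the graph $\Gamma_{\psi}$ is an entire space-like graph of constant mean curvature, and the gradient bound \eqref{eq-corB} holds on all of $\R^n$. I would then simply invoke Corollary \ref{corB} to conclude that $\Gamma_{\psi}$ must be a space-like hyperplane, finishing the proof.

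There is essentially no technical obstacle here; the only thing to verify is that the reduction to Corollary \ref{corB} is clean, i.e.\ that the constants $M$ and $\varepsilon$ selected above really give the required form of \eqref{eq-corB}. Since the verification is immediate by substitution, the result follows at once. In particular, no further use of Theorem \ref{thm1} or of Fact \ref{fact1} is needed beyond what is already packaged into Corollary \ref{corB}.
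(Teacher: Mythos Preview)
Your proposal is correct and matches the paper's own argument exactly: the paper introduces Corollary~\ref{cor2} with the sentence ``From the case where $\varepsilon = 1/2$ in Corollary~\ref{corB}, we obtain the following Liouville-type result,'' which is precisely the specialization you describe.
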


The following result is equivalent to Corollary \ref{cor2}. 

\begin{corollary}{\cite[Corollary 9.5]{AMR2016}}\label{cor3}
If an entire space-like graph $\Gamma_{\psi}$ of a $C^2$-differentiable function $\psi (u^{1}, \ldots, u^{n})$ in $\R^{n+1}_{1}$ has constant mean curvature and there exists some constant $C(<1)$ such that 
$|\nabla \psi|\leq C$ on ${\R}^n$, then it must be a space-like hyperplane. 
\end{corollary}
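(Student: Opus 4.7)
The plan is to reduce Corollary \ref{cor3} to Corollary \ref{cor2}, since the text asserts they are equivalent. Because Corollary \ref{cor2} has already been obtained as the $\varepsilon=1/2$ specialization of Corollary \ref{corB}, the task reduces to translating the pointwise gradient bound $|\nabla\psi|\leq C<1$ into the boundedness of the hyperbolic angle function $\sinh\theta$ on $\R^n$.

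First, I would invoke the identity \eqref{eq2-sinh}, which states that on a space-like graph
\[
\sinh\theta = \frac{|\nabla\psi|}{\sqrt{1-|\nabla\psi|^2}}.
\]
Next, I would observe that the scalar map $h(t):=t/\sqrt{1-t^2}$ is a strictly increasing continuous bijection of $[0,1)$ onto $[0,\infty)$, with explicit inverse $h^{-1}(s) = s/\sqrt{1+s^2}$. Consequently, the hypothesis $|\nabla\psi|\leq C$ for some fixed $C<1$ on $\R^n$ is equivalent to the bound
\[
\sinh\theta \leq \frac{C}{\sqrt{1-C^2}}
\]
on $\R^n$; conversely, any bound $\sinh\theta\leq B$ forces $|\nabla\psi|\leq B/\sqrt{1+B^2}<1$.

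With this equivalence in hand, the hypotheses of Corollary \ref{cor2} are satisfied, so $\Gamma_\psi$ must be a space-like hyperplane, which is the desired conclusion. Alternatively, one could apply Corollary \ref{corB} directly with $\varepsilon=1/2$ and $M = C/\sqrt{1-C^2}$, since the inequality \eqref{eq-corB} then reduces to $\sinh\theta\leq M$.

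I do not foresee any serious obstacle here: the entire content of this corollary is the elementary algebraic fact that $h$ is a bijection between $[0,1)$ and $[0,\infty)$. All of the geometric and analytic work is already encapsulated in Theorem \ref{thm1}, Corollary \ref{cor1}, and Corollary \ref{corB}; the role of Corollary \ref{cor3} is chiefly to record a practically more convenient reformulation of the Liouville-type statement of Corollary \ref{cor2}.
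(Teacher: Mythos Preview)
Your proposal is correct and follows essentially the same argument as the paper: use the monotonicity of $t\mapsto t/\sqrt{1-t^2}$ on $[0,1)$ together with \eqref{eq2-sinh} to convert the bound $|\nabla\psi|\le C<1$ into a uniform bound on $\sinh\theta$, and then invoke Corollary~\ref{cor2}. The paper likewise remarks on the converse to establish the equivalence with Corollary~\ref{cor2}, exactly as you do.
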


Indeed, if $|\nabla \psi | \leq C $ holds on $\R^n$, then we obtain 
\[
\dfrac{|\nabla \psi|}{\sqrt{1-|\nabla \psi|^2}}\leq \dfrac{C}{\sqrt{1-C^2}}
\]
on $\R^n$, that is, $\sinh{\theta}$ is bounded because the function $t/\sqrt{1-t^2}$ is monotone increasing on $0<t<1$. 
By the same argument, the converse also holds, that is, if $\sinh{\theta}$ is bounded on ${\R}^n$, then there exists some constant $C (<1)$ such that 
$|\nabla \psi|\leq C$ on ${\R}^n$.  

As described in \cite[Section 9.3]{AMR2016},  Corollary \ref{cor3} is a case of nonparametric form of the following  value-distribution-theoretical property 
of the Gauss map.  
\begin{fact}[\cite{Ai1992, Xi1991}, \cite{Pa1990} for a first weaker version]\label{fact2}
Let $f\colon M^n \to {\R}^{n+1}_{1}$ be a complete space-like constant mean curvature immersion. 
If the image of the time-like unit normal vector field (i.e., Gauss map) $\nu (M^{n})$ is contained in a geodesic ball in the hyperbolic $n$-space ${\H}^{n}(-1)$, 
then $f(M^{n})$ is a space-like hyperplane in ${\R}^{n+1}_{1}$. 
\end{fact}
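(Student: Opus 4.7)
The plan is to reduce Fact \ref{fact2} to Corollary \ref{cor3} by establishing a graphicity statement: a complete space-like constant mean curvature immersion whose Gauss image lies in a geodesic ball of $\H^{n}(-1)$ must in fact be an entire graph with gradient bounded strictly below $1$. Let $f\colon M^n \to \R^{n+1}_{1}$ be such an immersion with $\nu(M^n)\subset B_{r}(p_{0})\subset \H^{n}(-1)$.

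First, I would normalize by an ambient Lorentzian isometry so that the center $p_{0}$ becomes $e_{n+1}\in \H^{n}(-1)$. Since the hyperbolic distance from $\nu(q)$ to $e_{n+1}$ equals the hyperbolic angle $\theta(q)$ determined by $\langle \nu, e_{n+1}\rangle_{L}=-\cosh{\theta}$, the hypothesis yields the uniform pointwise bound $\sinh{\theta}\leq \sinh{r}$ on $M^n$.

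Second, and this is the heart of the argument, I would show $f(M^n)$ is the entire graph of some function $\psi\colon \R^n\to \R$ over $\{x^{n+1}=0\}$. Let $\pi\colon \R^{n+1}_{1}\to \R^n$ denote the projection onto the first $n$ coordinates and set $N:=\pi\circ f$. Since $f$ is space-like, every tangent plane is transverse to $e_{n+1}$, so $N$ is a local diffeomorphism. A short calculation using $\langle X, \nu\rangle_{L}=0$ together with the Cauchy-Schwarz inequality yields, for every $X\in T_{q}M^n$,
\[
g(X,X)\leq (N^{*}g_{\mathrm{Eucl}})(X,X)\leq \cosh^{2}(r)\, g(X,X),
\]
where $g=f^{*}\langle\,,\rangle_{L}$ is the induced Riemannian metric. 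Hence $g$ and $N^{*}g_{\mathrm{Eucl}}$ are uniformly quasi-isometric, so completeness of $g$ transfers to $N^{*}g_{\mathrm{Eucl}}$. A complete local diffeomorphism from $M^n$ onto the simply connected space $\R^n$ must be a covering, and therefore a global diffeomorphism, which identifies $f(M^n)$ with the graph of a $C^2$-function $\psi$.

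Finally, by \eqref{eq2-sinh} the bound $\sinh{\theta}\leq \sinh{r}$ rewrites as $|\nabla \psi|\leq \tanh{r}<1$ on all of $\R^n$, and Corollary \ref{cor3} immediately forces $\psi$ to be affine, so $f(M^n)$ is a space-like hyperplane. I expect the main difficulty to be the graphicity step: producing the two-sided metric comparison with constants depending only on $r$, and using completeness of $(M^n,g)$ to promote $N$ from a local to a global diffeomorphism onto $\R^n$. Once this is accomplished, the conclusion follows mechanically from the nonparametric results developed earlier in the paper.
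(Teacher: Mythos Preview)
The paper does not supply its own proof of Fact~\ref{fact2}; it quotes the result from \cite{Ai1992, Xi1991, Pa1990} and explicitly remarks that the standard proof proceeds via the Omori--Yau maximum principle, which lies outside the elementary divergence/Stokes methods of this paper.

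Your route is correct and genuinely different from that literature proof. You bypass Omori--Yau entirely by first establishing graphicity and then invoking Corollary~\ref{cor3}. The metric comparison $g\le N^{*}g_{\mathrm{Eucl}}\le \cosh^{2}(r)\,g$ is accurate (the upper bound follows from $|t|\le |Y|\tanh\theta$ exactly as you indicate), and already the lower inequality $N^{*}g_{\mathrm{Eucl}}\ge g$ suffices to transfer completeness; then $N$ is a local isometry from a complete manifold into $\R^{n}$, hence a Riemannian covering, hence a diffeomorphism. The Gauss-image bound then reads $|\nabla\psi|\le\tanh r<1$, and Corollary~\ref{cor3} finishes. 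Two small comments: the word ``onto'' in your covering sentence should come \emph{after} the covering argument (surjectivity is a conclusion, not a hypothesis), and note that the paper itself parenthetically cites the graphicity fact you reprove (complete space-like hypersurfaces are entire graphs, \cite{ARS1995}) without observing that combining it with Corollary~\ref{cor3} already yields Fact~\ref{fact2}. What each approach buys: the Omori--Yau argument in \cite{Ai1992, Xi1991} works intrinsically on the immersion and generalizes to ambient spaces where no global graph representation is available; your reduction is shorter in $\R^{n+1}_{1}$ but ultimately leans on the Calabi--Bernstein theorem (Fact~\ref{fact1}) through Corollary~\ref{cor3}, so the depth is relocated rather than removed.
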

In fact, every entire space-like constant mean curvature graph in ${\R}^{n+1}_{1}$ is complete (see \cite{CY1976}, note that the converse statement is also true, that is, 
every complete space-like hypersurface in ${\R}^{n+1}_{1}$ is an entire graph (\cite{ARS1995}))  
and the image $\nu ({\R}^n)$ is contained in a geodesic ball in ${\H}^{n}(-1)$ if and only if $\sqrt{1- |\nabla \psi|^2}$ is bounded away from zero on ${\R}^n$. However the proof of Fact \ref{fact2} is not as simple as our argument in this paper because it is obtained via an application of the Omori-Yau maximum principle (\cite{CX1992}, \cite{Om1967}, \cite{Ya1975}). See the book \cite{AMR2016} for details.   

For a space-like graph in ${\R}^{n+1}_{1}$, $\sinh{\theta}$ can be replaced with $1/\sqrt{1-|\nabla \psi|^2}$ in \eqref{eq-corA} and \eqref{eq-corB}. 
In fact, 
\[
\sinh{\theta}= \dfrac{|\nabla \psi|}{\sqrt{1-|\nabla \psi|^2}} < \dfrac{1}{\sqrt{1-|\nabla \psi|^2}} 
\]
holds. Thus we have the following result. 

\begin{corollary}\label{cor-Do}
If an entire space-like graph $\Gamma_{\psi}$ of a $C^2$-differentiable function $\psi (u^{1}, \ldots, u^{n})$ in ${\R}^{n+1}_{1}$ has constant mean curvature and there exist constants $M>0$ and $\varepsilon >0$ 
such that 
\[
\dfrac{1}{\sqrt{1-|\nabla \psi|^2}}\leq M \biggl{(}(u^1)^2+\cdots +(u^n)^2  \biggr{)}^{(1/2) - \varepsilon}
\]
on $\R^n$, then it must be a space-like hyperplane. 
\end{corollary}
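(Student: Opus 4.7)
The plan is to reduce Corollary \ref{cor-Do} directly to Corollary \ref{corB}. The observation that drives everything is that for a space-like graph in $\R^{n+1}_1$ one automatically has $|\nabla\psi|<1$ on the entire domain; this is one of the defining features of space-like graphs recalled in Section \ref{Sec-Int}. Combined with the identity \eqref{eq2-sinh}, this immediately gives the pointwise strict inequality
$$\sinh\theta = \frac{|\nabla\psi|}{\sqrt{1-|\nabla\psi|^2}} < \frac{1}{\sqrt{1-|\nabla\psi|^2}}$$
on $\R^n$, where $\theta$ is the hyperbolic angle between the time-like unit normal $\nu$ of $\Gamma_\psi$ and $e_{n+1}$.

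Chaining this inequality with the assumed upper bound on $1/\sqrt{1-|\nabla\psi|^2}$ yields
$$\sinh\theta \leq M\bigl((u^1)^2+\cdots+(u^n)^2\bigr)^{(1/2)-\varepsilon}$$
on $\R^n$, which is precisely the hypothesis \eqref{eq-corB} of Corollary \ref{corB}. Since $\Gamma_\psi$ is an entire space-like graph of constant mean curvature, Corollary \ref{corB} then forces $\Gamma_\psi$ to be a space-like hyperplane, completing the argument.

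There is no substantive obstacle here: the proof is a one-line majorization exploiting the trivial numerator bound $|\nabla\psi|<1$, followed by an appeal to the already-established Corollary \ref{corB}. Indeed, this chain of inequalities is exactly the one sketched in the paragraph immediately preceding the statement of Corollary \ref{cor-Do}, so the only task is to record it cleanly as a formal proof.
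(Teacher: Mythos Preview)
Your proposal is correct and follows exactly the approach the paper takes: the paragraph preceding Corollary~\ref{cor-Do} records the inequality $\sinh\theta = |\nabla\psi|/\sqrt{1-|\nabla\psi|^2} < 1/\sqrt{1-|\nabla\psi|^2}$ and states that this allows one to replace $\sinh\theta$ by $1/\sqrt{1-|\nabla\psi|^2}$ in \eqref{eq-corB}, which is precisely your reduction to Corollary~\ref{corB}.
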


From Corollary \ref{cor-Do}, we can show the following proposition which was given by Dong.   

\begin{proposition}[\cite{Do2007}]
Let $\Gamma_{\psi}$ be an entire space-like constant mean curvature graph of a $C^2$-differentiable function $\psi (u^{1}, \ldots, u^{n})$ in ${\R}^{n+1}_{1}$. 
If the function $\psi$ satisfies 
\[
\dfrac{1}{\sqrt{1-|\nabla \psi|^2}} = o (r)  \quad {as} \quad r\to +\infty, 
\]
where $r=\sqrt{(u^1)^2+\cdots +(u^n)^2}$, then $\Gamma_{\psi}$ must be a space-like hyperplane. 
\end{proposition}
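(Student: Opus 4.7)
The plan is to show that the hypothesis forces the constant mean curvature $H$ of $\Gamma_\psi$ to vanish, and then invoke the Calabi-Bernstein theorem (Fact~\ref{fact1}) to conclude that $\Gamma_\psi$ is a space-like hyperplane. It is tempting to try to reduce the statement to Corollary~\ref{cor-Do} verbatim, but the growth condition $1/\sqrt{1-|\nabla\psi|^2}=o(r)$ is strictly weaker than any polynomial bound $Mr^{1-2\varepsilon}$ with a fixed $\varepsilon>0$ (for instance, $r/\log r$ is $o(r)$ but is dominated by no such polynomial). So instead I would rerun the Stokes-type argument underlying the proof of Theorem~\ref{thm1}, substituting the little-$o$ estimate directly on the boundary sphere.

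Concretely, since $\Gamma_\psi$ is space-like we have $|\nabla\psi|<1$ on $\R^n$, and therefore
\[
\frac{|\nabla\psi|}{\sqrt{1-|\nabla\psi|^2}} \leq \frac{1}{\sqrt{1-|\nabla\psi|^2}}.
\]
Let $\omega$ be the $(n-1)$-form in \eqref{eq2-a}. Applying Stokes' theorem to $\omega$ on $\overline{B}^n(R')$ and using that $H$ is constant, the left-hand side has absolute value $n|H|V_n(R')^n$, while the same Cauchy-Schwarz estimate as in the proof of Theorem~\ref{thm1}, combined with the pointwise bound above, controls the right-hand side by
\[
\left(\max_{|u|=R'}\frac{1}{\sqrt{1-|\nabla\psi|^2}}\right) A_{n-1}(R')^{n-1}.
\]
Using $A_{n-1}=nV_n$ and rearranging gives, for every $R'>0$,
\[
|H|\cdot R' \leq \max_{|u|=R'}\frac{1}{\sqrt{1-|\nabla\psi|^2}}.
\]

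Letting $R'\to+\infty$, the hypothesis $1/\sqrt{1-|\nabla\psi|^2}=o(r)$ forces the right-hand side divided by $R'$ to tend to zero, so $H\equiv 0$. Thus $\Gamma_\psi$ is an entire space-like maximal graph in $\R^{n+1}_1$, and Fact~\ref{fact1} finishes the proof. I do not foresee any real obstacle: the argument is essentially a transcription of the proof of Corollary~\ref{cor1}, the only nontrivial point being the use of the pointwise inequality replacing $|\nabla\psi|$ in the numerator by $1$, which is legitimate only in the space-like regime and is precisely what allows the little-$o$ decay of the single function $1/\sqrt{1-|\nabla\psi|^2}$ to be propagated into a decay estimate on $|H|$.
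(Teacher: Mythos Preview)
Your argument is correct. You rerun the Stokes estimate of Theorem~\ref{thm1} on $\overline{B}^n(R')$, replace $|\nabla\psi|$ in the numerator by $1$ (legitimate since $|\nabla\psi|<1$), and obtain $|H|\,R'\le \max_{|u|=R'} 1/\sqrt{1-|\nabla\psi|^2}$; the $o(r)$ hypothesis then kills $H$, and Fact~\ref{fact1} finishes.

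The paper offers no detailed proof here: it simply asserts that the proposition follows from Corollary~\ref{cor-Do}. You are right to flag that this cannot be a verbatim logical reduction, since $o(r)$ growth (e.g.\ $r/\log r$) is not dominated by $M\,r^{1-2\varepsilon}$ for any fixed $\varepsilon>0$, so the hypothesis of Corollary~\ref{cor-Do} need not be satisfied. What the paper presumably means is that the \emph{method} behind Corollary~\ref{cor-Do} (i.e.\ the integral estimate of Theorem~\ref{thm1}) applies, which is exactly what you carry out. In that sense your proof is the same approach as the paper's, only made precise; the added value is that you isolate the inequality $|H|\le R'^{-1}\max_{|u|=R'}1/\sqrt{1-|\nabla\psi|^2}$, which shows transparently why the sublinear growth condition, rather than a power-law bound, is the natural hypothesis.
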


\begin{remark}
In \cite{Do2007}, Dong showed a generalization of this result for an entire space-like graph of parallel  mean curvature in the pseudo-Euclidean $(m+n)$-space ${\R}^{m+n}_{n}$ of index 
$n$. 
\end{remark}

\subsection{Time-like case}\label{Subsec-time}
Then $|\nabla \psi|> 1$ holds. By Theorem \ref{thm1}, we obtain the following Heinz-type estimate for the mean curvature of 
time-like graphs in $\R^{n+1}_{1}$. 
 
\begin{corollary}\label{thm2}
Let $\Gamma_{\psi}$ be a time-like graph of  a $C^2$-differentiable function $\psi (u^{1}, \ldots , u^{n})$ on $B^{n}(R)$ in $\R^{n+1}_{1}$. 
Suppose that there exist constants $M>0$ and $k\in \R$ such that 
\begin{equation}\label{eq3-0}
\dfrac{|\nabla \psi|}{\sqrt{|\nabla \psi|^2 -1}}\leq M \biggl{(}(u^1)^2+\cdots +(u^n)^2  \biggr{)}^{k} 
\end{equation}
on $B^{n}(R)$. If the mean curvature $H$ of $\Gamma_{\psi}$ satisfies the inequality 
\[
|H|\geq \alpha >0, 
\]
where $\alpha$ is constant, then the following inequality holds: 
\begin{equation}\label{eq3-1}
\alpha \leq MR^{2k-1}. 
\end{equation}
\end{corollary}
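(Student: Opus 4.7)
The plan is to recognize that Corollary \ref{thm2} is an immediate specialization of Theorem \ref{thm1} to the time-like regime, so essentially no new work is needed. Theorem \ref{thm1} was deliberately formulated with the absolute value $|1-|\nabla \psi|^2|$ inside the square root of \eqref{eq2-0} precisely in order to unify the space-like and time-like cases under a single statement, and Corollary \ref{thm2} is obtained by reading off the time-like half.

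First I would observe that, under the time-like hypothesis on $\Gamma_{\psi}$, the gradient satisfies $|\nabla \psi|>1$ throughout $B^{n}(R)$. Consequently $|1-|\nabla \psi|^2|=|\nabla \psi|^2-1$, and the left-hand side of the bound \eqref{eq2-0} becomes exactly the left-hand side of \eqref{eq3-0}. Hence the gradient-growth hypothesis \eqref{eq3-0} is a verbatim instance of the hypothesis of Theorem \ref{thm1} with the same constants $M$ and $k$. Since the mean curvature bound $|H|\geq \alpha>0$ also matches, Theorem \ref{thm1} applies and delivers the desired inequality $\alpha\leq MR^{2k-1}$.

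The main (and indeed only) point to check is that the proof of Theorem \ref{thm1} really does treat the time-like case correctly in every step. But this is already taken care of in that proof: the divergence identities \eqref{1-1} and \eqref{1-1-2} together give $d\omega=nH\,du^{1}\wedge\cdots\wedge du^{n}$ for the $(n-1)$-form $\omega$ of \eqref{eq2-a}, and the subsequent Stokes and Cauchy--Schwarz estimates are insensitive to the causal character of the graph; the denominator $\sqrt{|1-|\nabla \psi|^{2}|}$ absorbs both signs uniformly. So there is no substantive obstacle, and Corollary \ref{thm2} follows in a single line of argument.
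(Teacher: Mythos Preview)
Your proposal is correct and matches the paper's approach exactly: the paper simply states that Corollary~\ref{thm2} is obtained from Theorem~\ref{thm1} in the time-like case, without giving a separate proof. Your observation that $|1-|\nabla\psi|^2|=|\nabla\psi|^2-1$ when $|\nabla\psi|>1$, so that \eqref{eq3-0} is literally the hypothesis \eqref{eq2-0}, is precisely the point.
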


Moreover, we obtain the following result from Corollary \ref{cor1}.   
  
\begin{corollary}\label{cor4}
If an entire time-like graph $\Gamma_{\psi}$ of a $C^2$-differentiable function $\psi (u^{1}, \ldots, u^{n})$ 
in $\R^{n+1}_{1}$ has constant mean curvature $H$ and 
there exist constants $M>0$ and $\varepsilon >0$ such that 
\begin{equation}\label{eq-Be2}
\dfrac{|\nabla \psi|}{\sqrt{|\nabla \psi|^2 -1}}\leq M \biggl{(}(u^1)^2+\cdots +(u^n)^2  \biggr{)}^{(1/2) - \varepsilon}
\end{equation}
on ${\R}^n$, then it must be minimal (i.e. $H\equiv 0$). 
\end{corollary}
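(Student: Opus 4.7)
The plan is to deduce Corollary \ref{cor4} directly from the unified vanishing theorem, Corollary \ref{cor1}, by observing that in the time-like regime the two gradient expressions coincide. First I would note that for a time-like graph $\Gamma_{\psi}$ we have $|\nabla \psi| > 1$ on $\R^{n}$, so
\[
\sqrt{|1 - |\nabla \psi|^{2}|} = \sqrt{|\nabla \psi|^{2} - 1}
\]
pointwise. Consequently the hypothesis \eqref{eq-Be2} is literally the same as the hypothesis \eqref{eq-Be} of Corollary \ref{cor1} restricted to the time-like case.

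Given this identification, the conclusion follows in a single step: applying Corollary \ref{cor1} to the entire time-like constant mean curvature graph $\Gamma_{\psi}$ with the stated constants $M > 0$ and $\varepsilon > 0$ forces $H \equiv 0$, which is exactly the minimality assertion.

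If one prefers a more self-contained presentation, the same conclusion can be reached by going through Corollary \ref{thm2} with $k = (1/2) - \varepsilon$: on each ball $B^{n}(R)$ the constant mean curvature $H$ must satisfy $|H| \leq M R^{-2\varepsilon}$, and letting $R \to +\infty$ yields $H \equiv 0$. I do not anticipate any real obstacle here, since the main analytic content (the Stokes-theorem argument with the $(n-1)$-form $\omega$ defined in \eqref{eq2-a}) has already been absorbed into Theorem \ref{thm1} and Corollary \ref{cor1}; the only thing to verify is the harmless sign observation $|1 - |\nabla \psi|^{2}| = |\nabla \psi|^{2} - 1$ valid throughout the time-like regime.
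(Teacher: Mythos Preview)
Your proposal is correct and matches the paper's approach exactly: the paper simply states that Corollary~\ref{cor4} follows from Corollary~\ref{cor1}, which is precisely what you do by observing that $|1-|\nabla\psi|^2|=|\nabla\psi|^2-1$ in the time-like regime. Your alternative route via Corollary~\ref{thm2} is also valid and amounts to the same Stokes-theorem argument already packaged in Theorem~\ref{thm1}.
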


We note that there exist many entire time-like minimal graphs in $\R^{n+1}_{1}$. In fact, for a $C^2$-differentiable function $h(t)$ of one variable which satisfies $h'(t) >0$ for 
all $t\in \R$, $\psi (u^1 , \cdots, u^n):= u^n +h(u^1)$ gives an entire time-like minimal graph ${\Gamma}_{\psi}$ in ${\R}^{n+1}_{1}$.  

We do not know whether Corollary \ref{cor4} is optimal or not. However, for example, from the case where $\varepsilon =1/2$ and $n=2$ in Corollary \ref{cor4}, 
we provide the following result.   

\begin{corollary}\label{cor5}
Let $\Gamma_{\psi}$ be an entire time-like constant mean curvature graph of a $C^2$-differentiable function $\psi (u^1, u^2)$ in $\R^{3}_{1}$. 
If $|\nabla \psi|/\sqrt{|\nabla \psi|^2 -1}$ is bounded on $\R^2$, then $\Gamma_{\psi}$ is a translation surface,   
that is, $\psi (u^1, u^2) = h (u^1) + k(u^2) $, where $h (u^1)$ is a $C^2$-differentiable function of $u^1$ and $k (u^2)$ is a $C^2$-differentiable function of $u^2$.  
\end{corollary}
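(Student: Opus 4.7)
The plan is to proceed in two stages: first reduce the problem to the minimal case via Corollary \ref{cor4}, then exploit the structure of two-dimensional time-like minimal surfaces to derive the translation form. In the first stage, I would observe that the hypothesis of Corollary \ref{cor5}---that $|\nabla\psi|/\sqrt{|\nabla\psi|^2-1}$ is bounded on $\R^2$---is precisely the specialization of inequality \eqref{eq-Be2} in Corollary \ref{cor4} to the parameters $n=2$ and $\varepsilon=1/2$, since then the weight $\bigl((u^1)^2+(u^2)^2\bigr)^{(1/2)-\varepsilon}$ collapses to the constant $1$. Applying Corollary \ref{cor4} directly yields $H\equiv 0$, so $\Gamma_\psi$ is an entire time-like minimal graph, and $\psi$ satisfies
\[
(\psi_{u^2}^2 - 1)\psi_{u^1u^1} - 2\psi_{u^1}\psi_{u^2}\psi_{u^1u^2} + (\psi_{u^1}^2 - 1)\psi_{u^2u^2} = 0.
\]

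In the second stage, I would invoke the classical d'Alembert-type representation for two-dimensional time-like minimal surfaces in $\R^3_1$: every such surface admits a null-coordinate parameterization $X(s,t) = A(s) + B(t)$ with $A,B$ null curves in $\R^3_1$. Writing the null tangents as $A'(s) = \rho(s)\bigl(\cos\theta(s),\sin\theta(s),1\bigr)$ and $B'(t) = \sigma(t)\bigl(\cos\phi(t),\sin\phi(t),1\bigr)$ with $\rho,\sigma>0$, a direct computation using the inverse of the parameter map $(s,t)\mapsto\bigl(A_1(s)+B_1(t),A_2(s)+B_2(t)\bigr)$ yields
\[
\psi_{u^1} = \frac{\cos\bigl((\phi+\theta)/2\bigr)}{\cos\bigl((\phi-\theta)/2\bigr)}, \qquad
\psi_{u^2} = \frac{\sin\bigl((\phi+\theta)/2\bigr)}{\cos\bigl((\phi-\theta)/2\bigr)},
\]
whence $|\nabla\psi|^2 - 1 = \tan^2\bigl((\phi-\theta)/2\bigr)$ and $|\nabla\psi|/\sqrt{|\nabla\psi|^2-1}= 1/\bigl|\sin\bigl((\phi(t)-\theta(s))/2\bigr)\bigr|$. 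The boundedness hypothesis thus becomes the uniform angular separation $\bigl|\sin\bigl((\phi(t)-\theta(s))/2\bigr)\bigr|\ge 1/M$ for all $(s,t)\in\R^2$.

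The main obstacle is to upgrade this angular rigidity into the translation form $\psi(u^1,u^2)=h(u^1)+k(u^2)$---equivalently $\psi_{u^1u^2}\equiv 0$---in the original Cartesian coordinates. The uniform separation confines the ranges of $\theta(s)$ and $\phi(t)$ to intervals whose combined length is at most $2\pi-4\arcsin(1/M)$; I would then exploit the entireness of the graph, namely the requirement that $(s,t)\mapsto\bigl(A_1(s)+B_1(t),A_2(s)+B_2(t)\bigr)$ be a global diffeomorphism $\R^2\to\R^2$, in order to force $\theta$ and $\phi$ to be constant. Once the null tangents of $A$ and $B$ have constant directions, the pushforward to $(u^1,u^2)$ yields the desired splitting $\psi=h(u^1)+k(u^2)$. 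Verifying that the global diffeomorphism constraint genuinely rules out non-constant angles compatible with the angular separation is the delicate part of the argument; I expect it to require a careful use of the null-curve parameterization together with the fact that both factors $\rho$ and $\sigma$ are strictly positive on all of $\R$.
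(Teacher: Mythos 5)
Your first stage is exactly the paper's argument: the hypothesis is \eqref{eq-Be2} with $n=2$, $\varepsilon=1/2$, so Corollary \ref{cor4} gives $H\equiv 0$. For the second stage, however, the paper does no further work: it simply invokes Fact \ref{fact3} (Magid, Milnor), which states that \emph{every} entire time-like minimal graph in $\R^3_1$ is a translation surface, with no gradient hypothesis at all. Your attempt to reprove this step is where the proposal breaks down.

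The concrete gap is that your intermediate goal --- forcing both null-direction angles $\theta(s)$ and $\phi(t)$ to be constant --- is not only stronger than what is needed, it is false under the stated hypotheses. If both angles were constant, the surface $X(s,t)=A(s)+B(t)$ would lie in the plane spanned by the two constant null directions, i.e.\ $\Gamma_\psi$ would be a time-like plane; but the hypotheses of Corollary \ref{cor5} admit non-planar examples. Take $\psi(u^1,u^2)=u^2+h(u^1)$ with, say, $h'(t)=2+1/(1+t^2)$: this is an entire time-like minimal graph, $|\nabla\psi|/\sqrt{|\nabla\psi|^2-1}=\sqrt{1+h'^2}/h'\le \sqrt{5}/2$ is bounded, yet in the null decomposition one family of null curves is the $u^2$-lines (constant direction $(0,1,1)$, so $\phi\equiv\pi/2$) while the other has spatial direction $\bigl(2h'/(1+h'^2),\,(1-h'^2)/(1+h'^2)\bigr)$, so $\theta(s)$ genuinely varies. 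Thus entireness plus the uniform angular separation $|\sin((\phi-\theta)/2)|\ge 1/M$ cannot force constancy, and your "delicate part" cannot be carried out; the translation structure $\psi=h(u^1)+k(u^2)$ in the original Cartesian coordinates is not a consequence of constant null tangents but is precisely the content of Fact \ref{fact3}, which you should cite (or reprove along Magid's or Milnor's lines) instead.
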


The latter half of the statement of Corollary \ref{cor5} follows from the following fact. 

\begin{fact}[\cite{Ma1989, Mi1987}]\label{fact3}
Every entire time-like minimal graph in ${\R}^{3}_{1}$ is a translation surface.  
\end{fact}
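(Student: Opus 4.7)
Plan: Following the approach of Magid and Milnor cited as Fact \ref{fact3}, I would first obtain a null-coordinate (``Magid-Milnor'') decomposition for the time-like minimal graph, and then exploit the entire-graph hypothesis to force the translation-surface form $\psi = h(u^1) + k(u^2)$.

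First, substituting $n=2$ and $H=0$ into \eqref{1-1-2} and clearing denominators yields the Born--Infeld-type PDE
\begin{equation*}
(\psi_{u^2}^2 - 1)\psi_{u^1 u^1} - 2\psi_{u^1}\psi_{u^2}\psi_{u^1 u^2} + (\psi_{u^1}^2 - 1)\psi_{u^2 u^2} = 0,
\end{equation*}
whose discriminant $\psi_{u^1}^2 + \psi_{u^2}^2 - 1$ is positive on the time-like region. The equation is therefore strictly hyperbolic on $\R^2$ with two smooth transverse real characteristic line fields, and since $\R^2$ is simply connected they integrate to global smooth characteristic foliations giving null coordinates $(\xi,\eta)$ on $\Gamma_\psi$, in which the induced metric has the form $2E\,d\xi\,d\eta$ with $E\neq 0$. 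Differentiating the null conditions $\langle f_\xi, f_\xi\rangle_L = \langle f_\eta, f_\eta\rangle_L = 0$ with respect to $\eta$ and $\xi$ shows $f_{\xi\eta}$ is orthogonal to the tangent plane; combined with $H = II_{\xi\eta}/E \equiv 0$, this forces the wave equation
\begin{equation*}
f_{\xi\eta} \equiv 0 \qquad \text{on } \R^2,
\end{equation*}
whose general solution is $f(\xi,\eta) = A(\xi) + B(\eta)$ for some $C^2$ null curves $A, B\colon \R\to \R^3_1$ (null because $f_\xi = A'$ and $f_\eta = B'$ are null by choice of coordinates).

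Next I would invoke the entire-graph hypothesis. Writing $A = (A_1, A_2, A_3)$ and $B = (B_1, B_2, B_3)$, the planar projection $\Phi(\xi,\eta) := (A_1(\xi) + B_1(\eta),\, A_2(\xi) + B_2(\eta))$ must be a global $C^2$ diffeomorphism $\R^2 \to \R^2$. Parameterizing the projections of the null tangent vectors $A'(\xi), B'(\eta)$ onto the $(x^1,x^2)$-plane by polar angles $\theta_A(\xi), \theta_B(\eta)$, the Jacobian of $\Phi$ factors as a nonvanishing positive factor times $\sin\bigl(\theta_B(\eta) - \theta_A(\xi)\bigr)$, which must therefore have constant sign on $\R^2$. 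Combined with the properness of $\Phi$ (forced by surjectivity onto $\R^2$) and the null identities $(A_3')^2 = (A_1')^2 + (A_2')^2$ and $(B_3')^2 = (B_1')^2 + (B_2')^2$, a careful range-analysis argument forces at least one of the angles $\theta_A, \theta_B$ to be constant --- equivalently, one of $A, B$ is a null straight line in $\R^3_1$.

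Finally, after a Lorentz isometry of $\R^3_1$ I would assume $B(\eta) = \eta(1,0,1)$, so that $u^1 = A_1(\xi) + \eta$, $u^2 = A_2(\xi)$, $\psi = A_3(\xi) + \eta$; the entire-graph condition forces $A_2\colon \R\to\R$ to be a $C^2$ diffeomorphism, giving $\xi = A_2^{-1}(u^2)$ and $\eta = u^1 - A_1(A_2^{-1}(u^2))$. Substituting,
\begin{equation*}
\psi(u^1,u^2) = u^1 + (A_3 - A_1)\bigl(A_2^{-1}(u^2)\bigr) = h(u^1) + k(u^2),
\end{equation*}
as required. The main obstacle is the rigidity step in the third paragraph: abundant local time-like minimal surfaces admit Magid--Milnor decompositions in which neither $A$ nor $B$ is a null line, and the content of Fact \ref{fact3} is precisely that the global hypothesis of being an entire graph over $\R^2$ rules these configurations out. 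Making this rigorous --- leveraging both the global diffeomorphism of $\Phi$ and the null constraints on $A', B'$ to force constancy of one of the angle functions --- is the heart of the Milnor--Magid argument.
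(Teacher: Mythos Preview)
The paper does not prove Fact~\ref{fact3}; it is quoted as a known result from Magid and Milnor with no argument supplied, so there is no in-paper proof to compare your attempt against. Your outline does follow the strategy of those cited papers (null coordinates, the wave equation $f_{\xi\eta}=0$, the decomposition $f=A(\xi)+B(\eta)$ into null curves), and you have correctly flagged that the rigidity step---using the entire-graph hypothesis to force one of $A,B$ to be a null straight line---is the crux and remains unproved in your sketch.

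One additional gap that you did not flag: in your final paragraph the phrase ``after a Lorentz isometry of $\R^{3}_{1}$'' is too permissive. A general Lorentz boost does not preserve the property of being a graph over the fixed $(u^{1},u^{2})$-plane, so applying one would discard the very hypothesis you need. What is actually available once $B$ is known to be a null line with direction $(b_{1},b_{2},b_{3})$ is only a rotation of the $(u^{1},u^{2})$-plane (together with a reparametrization of $\eta$ and possibly $\psi\mapsto-\psi$); this preserves the graph structure and suffices to bring the direction to $(1,0,1)$. Your computation then yields the separable form $\psi=h(u^{1})+k(u^{2})$ in the \emph{rotated} base coordinates---in the original coordinates it need not be separable, as one sees by rotating the paper's own example $\psi=u^{n}+h(u^{1})$ through a generic angle in the base plane. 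The conclusion ``translation surface'' in Fact~\ref{fact3} should therefore be read either in the intrinsic sense $f=c_{1}(s)+c_{2}(t)$, or as the separable graph form holding after a suitable rotation of the base plane.
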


\subsection{Closing Remark}\label{subsec-CR} 
This paper is focused on the study of entire space-like graphs and time-like graphs in ${\R}^{n+1}_{1}$.  
Here we explain the recent development of the study of entire constant mean curvature graphs in $\R^{n+1}_{1}$. 
The following definitions are given in \cite{AHUY2019}.     
Let $\psi \colon \Omega \to \R$ be a $C^2$-differentiable function defined on a domain $\Omega$ in ${\R}^n$. 
A point where $|\nabla \psi | <1$ (resp. $|\nabla \psi|> 1$, $|\nabla \psi| = 1$) is called a {\it space-like} (resp. {\it time-like}, {\it light-like}) point of 
${\Gamma}_{\psi}$. If $\Omega$ contains only space-like (resp. time-like) points, then $\Gamma_{\psi}$ is space-like (resp. time-like).  
We remark that the first fundamental form $g$ of the graph ${\Gamma}_{\psi}$ in ${\R}^{n+1}_{1}$ is degenerate at each light-like point. 
If $\Omega$ contains both space-like points and time-like points, then ${\Gamma}_{\psi}$ is called of {\it mixed type}. 
Akamine, the first author, Umehara and Yamada proved the following fact. 

\begin{fact}{\cite[Corollary C]{AHUY2019}}\label{fact-light} 
An entire real analytic constant mean curvature graph in ${\R}^{n+1}_{1}$ which has no time-like points and does have a light-like point must be a light-like hyperplane.  
\end{fact}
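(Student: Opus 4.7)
The plan is to combine real analyticity with the CMC structure to reduce the problem to the classification of analytic solutions of the eikonal equation on $\mathbf{R}^n$. Set $F:=1-|\nabla\psi|^2$. Since $\Gamma_\psi$ has no time-like points, $F\ge 0$ on $\mathbf{R}^n$; since it admits a light-like point, $F(p_0)=0$ for some $p_0$. Because $\psi$ is real analytic, so is $F$, and analytic continuation on the connected domain $\mathbf{R}^n$ gives the dichotomy: either $F\equiv 0$, or $\{F=0\}$ is a proper real analytic subvariety of $\mathbf{R}^n$. The entire argument hinges on showing that only the first alternative occurs, and then identifying the solution.

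In the case $F\equiv 0$, the identity $|\nabla\psi|^2\equiv 1$ on $\mathbf{R}^n$ forces, by differentiation, $\mathrm{Hess}(\psi)\nabla\psi\equiv 0$, so the gradient map $\nabla\psi\colon\mathbf{R}^n\to S^{n-1}$ is constant along the integral curves of $\nabla\psi$. I would then show that the image of this analytic map into $S^{n-1}$ is a single point: combining the rank constraint from $\mathrm{Hess}(\psi)\nabla\psi=0$ with real analyticity on the connected domain $\mathbf{R}^n$, the level sets of $\psi$ are parallel affine hyperplanes, which forces $\psi$ to be affine with unit linear part; hence $\Gamma_\psi$ is a light-like hyperplane. (Alternatively, I would expand $\psi$ in a power series at $p_0$ and use the constraint $F\ge 0$ together with $|\nabla\psi|^2 \equiv 1$ to show inductively that all homogeneous components of degree $\ge 2$ vanish.)

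The essential step is ruling out the alternative $F\not\equiv 0$. In that case the open dense set $U:=\{F>0\}$ carries a space-like CMC graph satisfying \eqref{1-1}, and $p_0\in\partial U$ is an interior minimum of $F$ on $\mathbf{R}^n$. My plan is to normalize by a Lorentz-affine change so that $\nabla\psi(p_0)$ is a prescribed unit vector, subtract the tangent linear function, and then analyze the Taylor expansion of $\tilde\psi:=\psi-\psi(p_0)-\nabla\psi(p_0)\cdot(u-p_0)$ at $p_0$. Near $p_0$ the constraint $F\ge 0$ translates into a one-sided quadratic inequality on the first derivatives of $\tilde\psi$, while the CMC equation \eqref{1-1}, after multiplying through by a suitable power of $\sqrt{F}$ to absorb the degeneracy at the light-like boundary, should reduce to a relation from which each coefficient of the Taylor expansion of $\tilde\psi$ at $p_0$ can be forced to vanish, order by order. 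Since $\psi$ is real analytic and $\mathbf{R}^n$ is connected, $\tilde\psi\equiv 0$ globally, so $\psi$ is affine and $F\equiv 0$, contradicting the case assumption.

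The main obstacle is precisely this last step: the CMC equation is uniformly elliptic on $U$ but degenerates as $F\to 0$, so the classical strong maximum principle and Hopf boundary-point lemma cannot be applied directly to $F$ at $p_0$. The real analyticity of $\psi$ must be exploited essentially, either via an Aronszajn-type unique continuation argument after renormalizing the operator, or by a careful combinatorial analysis of the Taylor coefficients of $\tilde\psi$ showing that the sign constraint coming from $F\ge 0$ and the CMC identity together force every higher order coefficient to vanish. Once the degeneracy is controlled in this way, the conclusion that $\Gamma_\psi$ is a light-like hyperplane follows from the affine classification established in the first case.
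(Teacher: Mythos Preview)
The paper does not prove this statement at all: it is recorded as a \emph{Fact} with a citation to \cite{AHUY2019} (Corollary~C there), and the surrounding text even refers the reader to \cite{AHUY2019} and \cite{UY2019} for the very definition of ``constant mean curvature graph which does not depend on its causal type''. So there is no in-paper proof to compare your proposal against; any comparison would have to be with the argument in \cite{AHUY2019}, which is outside the present paper.

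On the substance of your plan: the dichotomy via real analyticity of $F=1-|\nabla\psi|^2$ is a reasonable opening, and the case $F\equiv 0$ can indeed be closed (global real analytic solutions of the eikonal equation on $\R^n$ are affine because the characteristics are straight lines and focal points would otherwise force singularities). But your write-up is a plan rather than a proof, and you yourself flag the essential gap: in the case $F\not\equiv 0$ you need to extract information from the CMC equation at a boundary point of $\{F>0\}$ where the operator degenerates, and neither the Aronszajn-type route nor the ``order by order Taylor coefficient'' route is carried out. That step is exactly where the difficulty lives, and nothing in the proposal shows it goes through. A second issue is that you use equation~\eqref{1-1} on the space-like set but never say what ``constant mean curvature'' means at the light-like point $p_0$; the paper stresses that this requires the extended definition from \cite{AHUY2019, UY2019}, and without it your Taylor-matching at $p_0$ has no equation to match against. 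Until those two points are resolved, the proposal remains incomplete.
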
   

See \cite{AHUY2019} (also \cite{UY2019}) for the precise definition of constant mean curvature graph which does not depend on its causal type. 
By Fact \ref{fact-light}, we know that there exists no entire real analytic constant mean curvature graph in ${\R}^{n+1}_{1}$ which has both space-like points and light-like points although there exist many nontrivial examples of entire space-like constant mean curvature graphs.  
Moreover, the first and third authors, Kokubu, Umehara and Yamada \cite[Remark 2.2]{HKKUY2017} showed that there does not exist any real analytic nonzero constant mean curvature graph of mixed type in $\R^{n+1}_{1}$.  
\begin{remark}
Several entire zero mean curvature graphs of mixed type in $\R^{3}_{1}$ were found in recent years. A systematic construction of this class is given in \cite{FKKRUYY2016}. 
See \cite{HST2020} for details about mixed type surfaces in Lorentzian 3-manifolds.  
\end{remark}



\end{document}